\newcommand{\bburl}[1]{\textcolor{blue}{\url{#1}}}
\theoremstyle{definition}
\newtheorem{thm}{Theorem}[section]
\newtheorem{cor}[thm]{Corollary}
\newtheorem{lem}[thm]{Lemma}
\newtheorem{prop}[thm]{Proposition}
\newtheorem{defi}[thm]{Definition}
\newtheorem{rek}[thm]{Remark}
\newcommand{\be}{\begin{equation*}}
\newcommand{\ee}{\end{equation*}}
\newcommand{\bel}{\begin{equation}}
\newcommand{\eel}{\end{equation}}
\newcommand{\bea}{\begin{eqnarray}}
\newcommand{\eea}{\end{eqnarray}}
\newcommand{\ben}{\begin{enumerate}}
\newcommand{\een}{\end{enumerate}}
\newcommand{\bi}{\begin{itemize}}
\newcommand{\ei}{\end{itemize}}
\newcommand{\bb}[1]{\mathbb{#1}}
\newcommand{\ul}[1]{\underline{#1}}
\newcommand{\ol}[1]{\overline{#1}}
\newcommand{\ca}[1]{\mathcal{#1}}
\newcommand{\Sym}{\text{Sym}}
\newcommand{\ten}{\otimes}
\newcommand{\bu}{\bullet}
\DeclareMathOperator{\Lie}{Lie}
\DeclareMathOperator{\Spec}{Spec}
\DeclareMathOperator{\dR}{dR}
\DeclareMathOperator{\et}{\acute{e}t}
\DeclareMathOperator{\stab}{stab}
\newcommand{\C}{\ensuremath{\mathbb{C}}}
\newcommand{\Z}{\ensuremath{\mathbb{Z}}}
\newcommand{\F}{\mathbb{F}}
\newcommand{\G}{\mathbb{G}}
\numberwithin{equation}{section}
\begin{document}

\title{Arbitrarily large jumps in the de Rham and Hodge cohomology of families in characteristic $p$}

\author{Casimir Kothari}
\address{Department of Mathematics, University of Chicago, Chicago, IL 60637}

\email{\textcolor{blue}{\href{mailto:ckothari@uchicago.edu}{ckothari@uchicago.edu}}}

\date{December 22, 2024}   

\begin{abstract} 
We construct smooth projective families of algebraic varieties in characteristic $p$ such that the dimensions of the de Rham and Hodge cohomology groups of the fibers can be made to jump by an arbitrarily large amount.  To do this, we first construct an example using the classifying stack of a finite flat group scheme which degenerates $\Z/p^2\Z$ to $\alpha_p \oplus \alpha_p$.  Along the way, we give a self-contained exposition of the construction of Godeaux--Serre varieties.
\end{abstract}

\maketitle

\section{Introduction}

If $f: X \to Y$ is a smooth projective family of complex algebraic varieties with $Y$ connected, it is a classical fact that the function on $y \in Y$, 
\be
y \mapsto \dim_{\C} H^i_{\dR}(X_y; \C),
\ee
is constant for any $i \geq 0$.  However, it is well-known that this fails for smooth projective families in mixed and positive characteristic.  In \cite{S}, Suh constructs families of surfaces over a mixed characteristic DVR with an arbitrarily large jump in geometric genus from the generic to the special fiber, while in \cite{CZ}, Cotner and Zavyalov construct a family of surfaces in equal characteristic $p > 0$ such that $\dim H^1_{\dR}(X_s/k(s)) = 2$ and $\dim H^1_{\dR}(X_\eta/k(\eta)) = 1$.  In both cases, the authors use the method of Godeaux--Serre, in which one obtains such families as quotients by the action of certain group schemes on complete intersections in projective space. 

In this article, we use the Godeaux--Serre method to construct families of algebraic varieties in equal characteristic $p>0$ with an arbitrarily large jump in algebraic de Rham and Hodge cohomology (of any degree). For a scheme $X$ over a field $k$, we write $h^i_{\dR}(X/k) := \dim_k H^i_{\dR}(X/k)$ and $h^{i, j}(X/k) := \dim_k H^j(X, \Omega^i_{X/k})$ for the de Rham and Hodge numbers of $X$, respectively. Let $S = \Spec R$ be the spectrum of a DVR of equal characteristic $p$ with closed point $s$ and generic point $\eta$.

\begin{thm} \label{thm:main}
Let $e, n \geq 1$ be positive integers.  Then there exists a smooth projective scheme $X/S$ of relative dimension $n + 1$ with geometrically connected fibers such that for any positive integer $k \leq n$ we have
\bel \label{eqn:main1}
h^k_{\dR}(X_s/k(s)) \geq h^k_{\dR}(X_{\eta}/k(\eta)) + e,
\eel
and for any nonnegative $i, j$ with $1 \leq i + j \leq n$ we have
\bel \label{eqn:main2}
h^{i, j}(X_s/k(s)) \geq h^{i, j}(X_{\eta}/k(\eta)) + e. 
\eel
\end{thm} 
In our examples, the Hodge to de Rham spectral sequence fails to degenerate,\footnote{This follows from our construction and the corresponding fact for the stack $B\alpha_p$ \cite[Proposition 4.12]{ABM}.} so that (\ref{eqn:main2}) does not immediately imply (\ref{eqn:main1}). 
 As a result, we make separate calculations for both de Rham and Hodge cohomology.

\begin{rek}
Base changing along the map $\F_p[t]_{(t)} \to R$ which sends $t$ to the uniformizer of $R$ and noting that algebraic de Rham and Hodge cohomology are compatible with field extensions, we find that it is enough to prove Theorem \ref{thm:main} over $\F_p[t]_{(t)}$.  Thus in the sequel we work over the base $S = \Spec \F_p[t]_{(t)}$.
\end{rek}

\begin{rek}
One can already obtain arbitrarily large de Rham jumps by taking self-products of the surface constructed in \cite{CZ}, however this requires the dimension of the family to increase with the prescribed gap $e$ (though see Remark \ref{rek:alt} for an approach to arbitrary jumps using the \cite{CZ} surface). Therefore the content of the theorem is that we can obtain arbitrarily large jumps in a given cohomological degree in families of fixed dimension.
\end{rek}

The strategy of proof is not new, and can be summarized in the following steps.  First, construct a finite flat group scheme $H/S$ such that $h^k_{\dR}(BH_s) \geq h^k_{\dR}(BH_\eta) + e$ and $h^{i,j}(BH_s) \geq h^{i, j}(BH_\eta) + e$. Second, construct actions of $H$ on projective space such that the complement of the free locus can be made to have arbitrarily large codimension. Finally, choose a complete intersection $Y \subset \bb{P}^N_S$ of relative dimension $n+1$ on which $H$ acts freely, and use the Lefschetz hyperplane theorem to deduce that $Y/H$ and $[\bb{P}^N/H]$ have the same de Rham and Hodge cohomology in a range of degrees.

The structure of this paper is as follows.  In Section \ref{sec:gp} we construct a desirable finite flat $G/S$ and study the cohomology of the classifying stacks of its fibers.  In fact, the group $H$ above will be a power of $G$ depending on the gap $e$. In Section \ref{sec:act} we give an exposition of the Godeaux--Serre method, explaining the construction of complete intersections with free actions of finite flat group schemes.  This section may be read independently of the rest of the paper. In Section \ref{sec:pf} we combine these to prove Theorem \ref{thm:main}.

\section*{Acknowledgements}
I would like to thank my advisor Matthew Emerton for his guidance and encouragement.  Thanks also to Sean Cotner, Luc Illusie, Ray Li, Akhil Mathew, Callum Sutton, Bogdan Zavyalov, and the anonymous referees for helpful conversations and comments relating to this work.  I would especially like to thank Sean Cotner for allowing me to include his construction (Remark \ref{rek:sean}) in this article. This material is based upon work supported by the National Science Foundation Graduate Research Fellowship under Grant No. 2140001.

\section{A group scheme in characteristic $p$} \label{sec:gp}

In this section, we construct a group scheme $G/S$ which will have desirable cohomological properties by modifying a construction of Reid \cite[Section 2.1]{R} of
a group scheme which degenerates $\G_m$ to $\G_a$.  Recall that $S = \Spec \F_p[t]_{(t)}$.

\begin{lem} \label{lem:gp}
There exists a finite flat commutative group scheme $G/S$ of order $p^2$ with $G_\eta \cong \Z/p^2\Z$ and $G_s \cong \alpha_p \oplus \alpha_p$.
\end{lem}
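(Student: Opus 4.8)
The plan is to produce $G$ as the Cartier dual of an explicitly presented finite flat group scheme $H/S$ of order $p^2$ with $H_\eta\cong\mu_{p^2}$ and $H_s\cong\alpha_p\oplus\alpha_p$. Since $\mu_{p^2}^\vee\cong\Z/p^2\Z$ and $\alpha_p$ is Cartier self-dual, $G:=H^\vee$ then satisfies $G_\eta\cong\Z/p^2\Z$ and $G_s\cong\alpha_p\oplus\alpha_p$. To build $H$ I would modify Reid's group scheme $\Spec R[x,(1+tx)^{-1}]$ (with $1+tx$ grouplike), which deforms $\G_m$ to $\G_a$, by passing to an order-$p^2$ truncation with a \emph{carefully chosen integral model}: set
\[
A \;:=\; R[x_1,x_2]\big/\big(x_1^{\,p},\; x_2^{\,p}-t\,x_1\big),
\]
and give $\Spec A$ the Hopf structure in which $1+tx_2$ is grouplike; equivalently $\Delta(x_2)=x_2\otimes 1+1\otimes x_2+t(x_2\otimes x_2)$ and $\Delta(x_1)=x_1\otimes 1+1\otimes x_1+t^{\,p+1}(x_1\otimes x_1)$, counit $x_i\mapsto 0$, with antipode coming from invertibility of $1+tx_2$ (note $(1+tx_2)^p=1+t^{\,p+1}x_1$ in $A$). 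The relation $x_2^p=tx_1$, in place of the naive $x_2^{p^2}=0$, is the whole point: it is what forces the special fibre to split.

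First I would check that $A$ is a well-defined commutative and cocommutative Hopf algebra over $R$; the only nonformal step is that $\Delta$ respects the relations. One has $\Delta(x_1^p)=\Delta(x_1)^p=x_1^p\otimes 1+1\otimes x_1^p+t^{\,p(p+1)}(x_1^p\otimes x_1^p)=0$, while $\Delta(x_2)^p=x_2^p\otimes 1+1\otimes x_2^p+t^{\,p}(x_2^p\otimes x_2^p)$ (all mixed terms of the $p$-th power die in characteristic $p$), and after substituting $x_2^p=tx_1$ this agrees term by term with $t\,\Delta(x_1)$. Next I would check flatness: writing $B:=R[x_1]/(x_1^p)$, which is $R$-free of rank $p$, the relation $x_2^p-tx_1$ is monic of degree $p$ in $x_2$ over $B$, so $A=B[x_2]/(x_2^p-tx_1)$ is $B$-free of rank $p$; hence $A$ is $R$-free of rank $p^2$ on $\{x_1^ix_2^j:0\le i,j\le p-1\}$, and $H:=\Spec A$ is finite flat of order $p^2$ over $S$.

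It then remains to identify the fibres. Over $\eta$, $t$ is invertible, so $x_1=t^{-1}x_2^p$ and $A_\eta=\F_p(t)[x_2]/(x_2^{p^2})=\F_p(t)[u_2]/(u_2^{p^2}-1)$ with $u_2:=1+tx_2$ grouplike; thus $H_\eta\cong\mu_{p^2}$. Over $s$ we put $t=0$: the relations become $x_1^p=x_2^p=0$ and both comultiplications become primitive, so $H_s\cong\Spec\F_p[x_1,x_2]/(x_1^p,x_2^p)$ with $x_1,x_2$ primitive, i.e. $H_s\cong\ker\!\big(F:\G_a^2\to\G_a^2\big)\cong\alpha_p\oplus\alpha_p$. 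Finally, Cartier duality is exact and compatible with base change, so $G:=H^\vee$ is finite flat of order $p^2$ over $S$ with $G_\eta\cong\mu_{p^2}^\vee\cong\Z/p^2\Z$ and $G_s\cong(\alpha_p\oplus\alpha_p)^\vee\cong\alpha_p\oplus\alpha_p$.

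The main obstacle, and the reason a naive construction fails, is precisely this choice of integral model. The obvious candidates — $\Spec R[x]/(x^{p^2})$ with $1+tx$ grouplike, or a Frobenius kernel $[F^2]$ inside a deformation of $\G_m$, or inside the Witt group scheme $W_2$ — all specialise at $t=0$ to $\alpha_{p^2}=\ker(F^2:\G_a\to\G_a)$ or to $W_2[F]$, which are non-split self-extensions of $\alpha_p$ (unlike $\alpha_p\oplus\alpha_p$, they are not killed by both Frobenius and Verschiebung). The two-generator model above is rigged so that at $t=0$ the relation $x_2^p=tx_1$ degenerates to $x_2^p=0$, decoupling $x_1$ from $x_2$, and the grouplike unit $1+tx_2$ collapses to the primitive $x_2$; verifying that this really defines a flat Hopf algebra over $R$ — in particular that $\Delta$ is integrally, not merely generically, compatible with $x_2^p=tx_1$ — is the substance of the argument, after which Cartier duality and the fibrewise identifications are formal.
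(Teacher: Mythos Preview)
Your construction is correct and is essentially identical to the paper's: up to the renaming $(x_1,x_2)\leftrightarrow(x,y)$, you write down the same Hopf algebra $A=R[x,y]/(x^p,\,y^p-tx)$ with $1+ty$ grouplike, identify the fibres as $\mu_{p^2}$ and $\alpha_p\oplus\alpha_p$, and then Cartier dualise. You are simply more explicit than the paper about verifying that $\Delta$ respects the relation $y^p=tx$ and that $A$ is $R$-free of rank $p^2$, and you add a helpful remark on why the one-generator model $R[y]/(y^{p^2})$ would specialise to $\alpha_{p^2}$ rather than $\alpha_p\oplus\alpha_p$.
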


\begin{proof}
We begin by constructing the Cartier dual $G^\vee$ of $G$. First, let $R = \F_p[t]_{(t)}$ and consider the $R$-algebra
\be
A := \frac{R[x, y]}{(x^p, y^p - tx)}.
\ee
$A$ is freely generated as an $R$-module by monomials $x^a y^b$ with $0 \leq a, b \leq p-1$.  We endow $A$ with a Hopf algebra structure $\Delta: A \to A \otimes A, e: A \to R, i: A \to A$ given by
\begin{align*}
&\Delta(y) = 1 \ten y + y \ten 1 + t y \ten y \\
&e(y) = 0 \\
&i(y) = \frac{-y}{1 + ty},
\end{align*}
which by the relation $tx = y^p$ forces $\Delta(x) = 1 \ten x + x \ten 1 + t^{p+1} x \ten x, e(x) = 0$, and $i(x) = \frac{-x}{1 + t^{p+1} x}$.
One checks by hand that axioms for a commutative Hopf algebra hold (note that one only has to check these relations on the generators).

When $t = 0$, we get $k[x, y]/(x^p, y^p)$, with Hopf algebra structure
\begin{align*}
&\Delta(x) = 1 \ten x + x \ten 1, \Delta(y) = 1 \ten y + y \ten 1 \\
&e(x) = e(y) = 0 \\
&i(x) = -x, i(y) = -y,
\end{align*}
which we recognize as $\alpha_p \oplus \alpha_p$, while when $t$ is invertible we get $k(t)[y]/((1 + ~ty)^{p^2} - ~1)$ with Hopf algebra structure
\begin{align*}
&\Delta(1 + ty) = (1 + ty) \otimes (1 + ty)\\
&e(1 + ty) = 1 \\
&i(1 + ty) = \frac{1}{1 + ty},
\end{align*}
which we recognize as $\mu_{p^2}$. Taking the Cartier dual gives the desired $G/S$.
\end{proof}

In the above proof, note that $A/(x) \cong R[y]/(y^p)$ with its given Hopf algebra structure recovers the deformation of $\mu_p$ to $\alpha_p$ constructed in \cite[Proposition 3.1]{R}. It is also interesting to note that the group scheme $G/S$ constructed above is a truncated Barsotti-Tate group generically but is far from being truncated Barsotti-Tate on the special fiber, making it an equal characteristic $p$ example of Raynaud's "affaissement" or "drooping" of truncated Barsotti-Tate groups \cite[Section 3]{Ray2}.

\begin{rek} \label{rek:sean}
An alternate geometric construction of a deformation of $\Z/p^2\Z$ to $\alpha_p \oplus \alpha_p$ due to Sean Cotner is as follows: begin with a degeneration $\ca{E}$ of an ordinary elliptic curve into a supersingular one such that $\ca{E}$ has rational $p^2$-torsion generically.  Then the pullback of the short exact sequence
\be
0 \to \ker V \to \ker V^2 \to \ker V \to 0
\ee
by the Frobenius map $F: \ker V \to \ker V$ yields such a deformation.
\end{rek}

Now we calculate the de Rham and Hodge cohomology of the classifying stacks $BG_s$ and $BG_{\eta}$.  For definitions and basic properties of de Rham and Hodge cohomology of stacks, we refer the reader to \cite[Section 2]{ABM} and \cite[Section 2.4]{CZ}. However, for the calculations in this article, one only needs to know Totaro's theorem (\cite[Theorem 3.1]{T}) and the fact that de Rham and Hodge cohomology satisfy versions of the Lefschetz hyperplane theorem and projective bundle formula in this setting, as in \cite[Section 5]{ABM}.

\begin{prop} \label{prop:coh}
For the group scheme $G/S$ of Lemma \ref{lem:gp}, we have
\be
\lim_{m \to \infty} (h^k_{\dR}(B(G^m_s)) - h^k_{\dR}(B(G^m_{\eta}))) = \infty
\ee
for any $k \geq 1$ and
\be
\lim_{m \to \infty} (h^{i,j}(B(G^m_s)) - h^{i,j}(B(G^m_{\eta}))) = \infty
\ee
for any $i, j \geq 0$ with $i + j \geq 1$. Moreover, the above differences are all nonnegative for any $m \geq 0$. 
\end{prop}

\begin{proof}
First we calculate the Hodge and de Rham cohomology of $B(\Z/p^2\Z)$ and $B\alpha_p$. Since $\Z/p^2\Z$ is a discrete group, one has by \cite[Lemma 10.2]{T} that
\begin{align*}
h^{i,j}(B(\Z/p^2\Z))& = \begin{cases} 1 & i = 0 \\ 0 & i > 0, \end{cases} \\
h^k_{\dR}(B(\Z/p^2\Z))& = 1 
\end{align*}
for all $i,j, k \geq 0$.  Next, by \cite[Proposition 4.10]{ABM} and \cite[Proposition 4.12]{ABM}, we have
\begin{align*}
h^{i,j}(B \alpha_p) &= \begin{cases} 0 & j < i-1 \\ 1 & j = i-1 \text{ or } i = 0 \\ 2 & j \geq i > 0 \end{cases} \\
h^k_{\dR}(B \alpha_p) &= 1 
\end{align*}
for all $i, j, k \geq 0$.  The K\"unneth formula for de Rham cohomology then gives
\be
h^k_{\dR}(B(G^m_\eta))) = \binom{m+k-1}{k}, h^k_{\dR}(B(G^m_s)) = \binom{2m + k - 1}{k}
\ee
from which the first statement of the proposition follows.  By the K\"unneth formula for Hodge cohomology (\cite[Proposition 5.1]{T}), we have
\be
h^{i,j}(B(G_\eta^m)) = \begin{cases} \binom{m+j-1}{j} & i = 0 \\ 0 & i > 0. \end{cases}
\ee
On the special fiber, for $i = 0$ we have
\be
h^{0, j}(B(G_s^m)) = \binom{2m+j-1}{j}.
\ee
For the case of $i > 0$, we argue as follows.  Take $m$ sufficiently large so that we may    choose a fixed partition $i = u_1 + \dots + u_{2m}$ where the $u_i$ are all equal to 0 or 1. The K\"unneth formula and the fact that $h^{i, j}(B\alpha_p) \geq 1$ for $i \in \{0, 1\}$ then yield
\be
h^{i, j}(B(G_s^m)) \geq \sum_{v_1 + \dots + v_{2m} = j} \left(\prod_t h^{u_t, v_t}(B\alpha_p)\right) \geq \sum_{v_1 + \dots + v_{2m} = j} 1 = \binom{2m + j - 1}{j},
\ee
giving the second statement of the proposition. The nonnegativity statement of the proposition is clear from the above calculations.
\end{proof}

\section{The Construction of Godeaux--Serre Varieties} \label{sec:act}

In this section, we give an expository account of the construction of Godeaux--Serre varieties over a noetherian local ring, culminating in Corollary \ref{cor:GS}. Godeaux--Serre varieties are smooth projective schemes which arise as the quotient of a complete intersection by the free action of a finite flat group scheme, and they have proven useful in the construction of interesting examples and pathologies in characteristic $p$ and mixed characteristic algebraic geometry.  One key reason for this utility is the fact that cohomological invariants of Godeaux--Serre varieties are often amenable to calculation via group cohomology of the associated group scheme. For example, Serre used this method to give an example of a smooth projective surface in characteristic $p$ for which Hodge symmetry fails \cite[Proposition 16]{Ser}, and Raynaud used it to give examples of surfaces over a mixed characteristic DVR with non-flat torsion component of the Picard scheme \cite[Proposition 4.2.4]{Ray}.

\subsection{Generalities on Group Actions}
Let $S$ be a locally noetherian scheme, $G$ a finite flat $S$-group scheme, and $X$ an $S$-scheme.  An action of $G$ on $X$ is an $S$-map $G \times_S X \to X$ inducing a group action on $T$-valued points for all $S$-schemes $T$.

\begin{defi}
If $T$ is an $S$-scheme and $x \in X(T)$, the stabilizer of $x$ in $G$ is the functor which sends a $T$-scheme $U$ to $\{g \in G(U) : g x_U = x_U\}$.  The free locus is the functor sending an $S$-scheme $T$ to those points in $X(T)$ with trivial stabilizer.
\end{defi}

\begin{lem} \label{lem:rep}
If $X/S$ is separated, the free locus is represented by an open subscheme of $X$, and is compatible with arbitrary base change. In particular, the free locus is determined by its values on affine $S$-schemes.
\end{lem}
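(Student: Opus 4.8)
Proof proposal. The plan is to realize the free locus as the complement of the support of a finite-type sheaf attached to the \emph{universal stabilizer} of the action, and then to invoke the standard fact that such a locus is open and its formation commutes with base change. First I would construct the universal stabilizer. Let $a\colon G\times_S X\to X$ be the action and $q\colon G\times_S X\to X$ the second projection, and set $\Phi:=(a,q)\colon G\times_S X\to X\times_S X$, so $\Phi(g,x)=(gx,x)$ on points. Since $X/S$ is separated, $\Delta_{X/S}\colon X\to X\times_S X$ is a closed immersion, so $Z:=\Phi^{-1}\bigl(\Delta_{X/S}(X)\bigr)$ is a closed subscheme of $G\times_S X$ with $Z(T)=\{(g,x)\in G(T)\times X(T):gx=x\}$. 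The projection $\pi\colon Z\to X$ is finite, being the composite of the closed immersion $Z\hookrightarrow G\times_S X$ with $q$, and $q$ is finite as the base change of $G\to S$. The identity section of $G$ induces a section $\sigma\colon X\to Z$, $x\mapsto(e,x)$, with $\pi\sigma=\id_X$. Unwinding the definitions, for any $S$-scheme $T$ and $x\in X(T)$ one has $\mathrm{Stab}_x=Z\times_{X,x}T$ as a $T$-group scheme.

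Next I would translate triviality of the stabilizer into the vanishing of a sheaf. As $\pi$ is finite, hence affine, $Z=\underline{\Spec}_X\ca{A}$ for the $\co_X$-algebra $\ca{A}:=\pi_*\co_Z$, which is of finite type as an $\co_X$-module; the section $\sigma$ corresponds to an augmentation $\varepsilon\colon\ca{A}\to\co_X$ split by the unit, so $\ca{A}=\co_X\oplus\ca{J}$ with $\ca{J}:=\ker\varepsilon$. Being a direct summand of $\ca{A}$, the sheaf $\ca{J}$ is of finite type, so $\Supp\ca{J}$ is closed. For $x\colon T\to X$, since relative $\underline{\Spec}$ commutes with base change, $\mathrm{Stab}_x=Z\times_{X,x}T=\underline{\Spec}_T(x^*\ca{A})=\underline{\Spec}_T(\co_T\oplus x^*\ca{J})$, with structure map split by $x^*\varepsilon$. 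Now $\mathrm{Stab}_x$ is the trivial group scheme --- i.e.\ $\mathrm{Stab}_x(U)=\{e\}$ for every $T$-scheme $U$, which by Yoneda (take $U=\mathrm{Stab}_x$) is equivalent to $\mathrm{Stab}_x\to T$ being an isomorphism --- if and only if $x^*\ca{J}=0$. Hence the free locus is the subfunctor $T\mapsto\{x\in X(T):x^*\ca{J}=0\}$ of $h_X$, and this is represented by the open subscheme $X\smallsetminus\Supp\ca{J}$: a morphism landing in the complement of $\Supp\ca{J}$ clearly kills $\ca{J}$, and conversely if $x^*\ca{J}=0$ then for each $t\in T$ the stalk $(x^*\ca{J})_t=\ca{J}_{x(t)}\ten_{\co_{X,x(t)}}\co_{T,t}$ vanishes, whence (reducing modulo $\mathfrak{m}_t$ and using that $\co_{X,x(t)}\to\co_{T,t}$ is local) $\ca{J}_{x(t)}/\mathfrak{m}_{x(t)}\ca{J}_{x(t)}=0$, so $\ca{J}_{x(t)}=0$ by Nakayama and $x(t)\notin\Supp\ca{J}$. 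This gives representability by an open subscheme.

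Finally, for base change along $S'\to S$ with $g\colon X':=X\times_S S'\to X$: the formation of $G\times_S X$, of $\Phi$, and of $\Delta_{X/S}$ all commute with $g$ (separatedness and its diagonal are stable under base change), so $Z'=Z\times_X X'$ is the universal stabilizer of the $G'$-action on $X'$; as $\pi$ is affine this yields $\ca{A}'=g^*\ca{A}=\co_{X'}\oplus g^*\ca{J}$ with augmentation pulled back, so the finite-type sheaf attached to the $G'$-action on $X'$ is $g^*\ca{J}$, and $\Supp(g^*\ca{J})=g^{-1}(\Supp\ca{J})$ by the same Nakayama argument applied pointwise. Thus the free locus of the $G'$-action on $X'$ is $g^{-1}$ of the free locus of the $G$-action on $X$. (Alternatively, once representability is established, compatibility with base change is automatic, since the free-locus functor of $(G',X')$ on $S'$-schemes is exactly the restriction of the free-locus functor of $(G,X)$.) The last assertion follows immediately: an open subscheme of $X$, equivalently the Zariski sheaf that it represents, is determined by its values on affine $S$-schemes.

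I would flag as the one delicate point the translation in the second paragraph: ``trivial stabilizer'' must be read as the scheme-theoretic condition $x^*\ca{J}=0$, which is \emph{open}, rather than the weaker and non-open condition that the reduced fibre of $\pi$ over $x$ be a single point --- a distinction that genuinely matters here, since the relevant group schemes (such as $\alpha_p$) are infinitesimal. The relative-$\underline{\Spec}$ description and the Nakayama step are precisely what make the correct condition behave well under arbitrary, possibly non-flat, test schemes; everything else is formal.
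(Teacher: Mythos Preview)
Your proof is correct and is the standard argument for this fact: construct the universal stabilizer $Z\hookrightarrow G\times_S X$ as the preimage of the diagonal under $(a,q)$, observe that $\pi\colon Z\to X$ is finite (using that $G\to S$ is finite and $X/S$ is separated), and identify the free locus with the open complement of $\Supp\ca{J}$ for the augmentation ideal $\ca{J}\subset\pi_*\co_Z$, the Nakayama step handling the non-flat test schemes. The paper itself does not give a proof but simply cites \cite[Lemma~2.1]{CZ2}, so there is no approach to compare against; your write-up supplies precisely the details that reference contains.
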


\begin{proof}
See {\cite[Lemma 2.1]{CZ2}}.
\end{proof}

If $X/S$ is separated, we will refer to the complement of the free locus as the \textit{fixed locus}, which by Lemma \ref{lem:rep} is closed in $X$.  In this situation, we say that the action of $G$ on $X$ is free if the free locus is $X$, i.e. if for all $T$ and all $x \in X(T)$, we have that the stabilizer of $x$ in $G$ is trivial.  It is straightforward to check from the definitions that this is equivalent to the usual condition that the graph morphism $G \times_S X \to X \times_S X$ is a monomorphism of schemes.

The following theorem summarizes the facts we will need concerning quotient schemes. For a full treatment of quotients of schemes by the actions of finite flat group schemes, we refer the reader to \cite[Exp. V]{SGA3} and also to the readable treatment in \cite[Ch. 4]{EGM}.

\begin{thm}[Existence and Properties of Quotients] \label{thm:quot}
Let $X$ be a quasiprojective $S$-scheme with an action of a finite flat $S$-group scheme $G$.  Then 
\ben
\item[(a)] A categorical quotient $X/G$ exists as a quasiprojective $S$-scheme and the natural quotient map $\pi: X \to X/G$ is finite and surjective;

\item[(b)] If $S$ is quasi-compact and quasi-separated and $X$ is projective over $S$, then $X/G$ is projective over $S$;

\item[(c)] If the action of $G$ on $X$ is free and $X$ is smooth over $S$, then so is $X/G$;
\een
\end{thm}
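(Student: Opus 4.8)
The plan is to regard (a)--(b) as essentially the classical theory of quotients of quasiprojective schemes by finite flat group schemes, and to concentrate the real work on (c) and (d), the former of which hinges on the fact that a free action turns $\pi$ into an fppf torsor. For (a), I would first reduce to the affine case: since $X$ is quasiprojective over $S$, every $G$-orbit is a finite set of points, hence lies in an affine open, so $X$ is covered by $G$-stable affine opens, and over an affine open of $S$ we may take $X=\Spec B$, $S=\Spec R$ with $G$ given by a finite locally free Hopf $R$-algebra. Then $X/G:=\Spec B^G$, where $B^G$ is the equalizer of the coaction $B\to B\otimes_R\mathcal O_G(G)$ and $b\mapsto b\otimes 1$. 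The one substantive point is that $B$ is module-finite over $B^G$: this is the usual integrality argument, where each $b\in B$ is shown to satisfy a monic polynomial over $B^G$ whose coefficients are ``symmetric functions of the conjugates of $b$'' built from the regular representation of $G$ (the finite-flat analogue of $\prod_{g\in G}(T-gb)$ for constant $G$). Module-finiteness yields that $\pi\colon X\to X/G$ is finite and surjective, and (by Artin--Tate) that $X/G$ is of finite type, hence finitely presented, over $S$; one checks $\mathcal O_{X/G}=(\pi_\ast\mathcal O_X)^G$, identifying $X/G$ with the ringed-space quotient, and glues over $S$ and over the $G$-stable affine cover. Quasiprojectivity of $X/G$ over $S$ follows by pushing an $S$-ample bundle on $X$ down along the finite map $\pi$ (a norm construction), or may simply be cited; see \cite{CZ2} and the references therein (Mumford's GIT, SGA~3). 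For (b): by (a) the scheme $X/G$ is quasiprojective over $S$ and $\pi$ is finite surjective, so the composite $X\to X/G\to S$ being proper, with $X\to X/G$ surjective and $X/G\to S$ separated of finite type, forces $X/G\to S$ to be proper by the cancellation property for proper morphisms; a proper quasiprojective $S$-scheme is a closed subscheme of some $\mathbb{P}^n_S$, hence projective over $S$, giving (b).

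For (c), the key input is that when $G$ acts freely on $X$ the quotient map $\pi\colon X\to X/G$ is an fppf $G$-torsor --- equivalently $G\times_S X\to X\times_{X/G}X$, $(g,x)\mapsto(gx,x)$, is an isomorphism --- so in particular $\pi$ is faithfully flat, quasi-compact, and locally of finite presentation; this is again standard (cf.\ \cite{CZ2}). I would then verify the three defining conditions of smoothness of $X/G\to S$ by descending each along $\pi$. Flatness of $X/G\to S$ descends from flatness of $X\to S$ along the faithfully flat quasi-compact morphism $\pi$; finite presentation is clear from (a). For the geometric fibers, the torsor property implies that formation of $\pi$ commutes with base change along any geometric point $\bar s\to S$, so $(X/G)_{\bar s}=X_{\bar s}/G_{\bar s}$ with $X_{\bar s}\to(X/G)_{\bar s}$ again a torsor (hence faithfully flat) and $X_{\bar s}$ regular, being smooth over the field $\kappa(\bar s)$; since $B$ regular for a flat local homomorphism $A\to B$ of Noetherian local rings forces $A$ regular, $(X/G)_{\bar s}$ is regular, and a regular scheme of finite type over an algebraically closed field is smooth. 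Thus $X/G\to S$ is flat and finitely presented with smooth geometric fibers, hence smooth. (Alternatively one may invoke directly that smoothness of the source descends to the target along a surjective, faithfully flat, locally finitely presented morphism.)

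For (d), by Lemma~\ref{lem:rep} it suffices to check triviality of stabilizers on affine $S$-schemes. If $T$ is an affine $S$-scheme and $(x,y)\in(X\times_S Y)(T)=X(T)\times Y(T)$, then for every $T$-scheme $U$ the stabilizer of $(x,y)$ in $G(U)$ is contained in the stabilizer of $x$ in $G(U)$, which is trivial because $G$ acts freely on $X$; hence the stabilizer of $(x,y)$ is trivial and the free locus of $X\times_S Y$ is everything. (Separatedness of $Y/S$ is used only to guarantee that $X\times_S Y$ is separated over $S$, so that freeness is meaningful for it via Lemma~\ref{lem:rep}.) The main obstacle is not a single computation but that (a)--(b) package together several classical facts --- existence and the ringed-space description of the quotient, quasiprojectivity, norms of ample bundles; in a self-contained treatment the module-finiteness of $B$ over $B^G$ and the ample-bundle descent are the technical crux. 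Among the statements we actually use, the only delicate input is the one feeding (c), namely that a free action of a finite flat group scheme turns $\pi$ into an fppf torsor; granting that, (c) and (d) are formal descent.
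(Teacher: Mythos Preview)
Your proposal is correct and follows essentially the same line as the paper: parts (a) and (c) are the classical quotient/torsor theory (which the paper simply cites from \cite{CZ2} rather than sketching), part (b) is exactly the paper's argument that $X/G$ is proper over $S$ via the surjection from the proper $X$ and then proper plus quasiprojective implies projective, and part (d) is immediate from the definitions. The only cosmetic remark is that in (b) what you invoke is not literally the ``cancellation property'' for proper morphisms but rather that universal closedness passes to the target of a surjection (Stacks~03GN), which is precisely what the paper uses.
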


\begin{proof}
First, the action $G \times_S X \to X$ yields a groupoid object in schemes, as in \cite[\href{https://stacks.math.columbia.edu/tag/0234}{Tag 0234}]{Sta}. For part (a), the representability of the ringed space quotient $X/G$ is then \cite[Exp. V, Th\'eorème 4.1]{SGA3}, while the quasiprojectivity of the quotient is proven in \cite[Exp. V, Remarque 5.1]{SGA3}.  

For (b), $X/G$ is proper over $S$ by part (a) and \cite[\href{https://stacks.math.columbia.edu/tag/03GN}{Tag 03GN}]{Sta}. Then since $X/G$ is quasiprojective and proper over $S$, it is projective by \cite[\href{https://stacks.math.columbia.edu/tag/0BCL}{Tag 0BCL}]{Sta}.  

Finally we have by \cite[Exp. V, Th\'eorème 4.1]{SGA3} that in the case of a free action the quotient map $X \to X/G$ is finite locally free, so (c) follows by descent.
\end{proof}

\subsection{Complete Intersections with Free Group Actions}

For the purposes of finding projective varieties with free actions, we are interested in group scheme actions on projective spaces. The following arguments are classical (see e.g. the proof of \cite[Lemma 2.1.1]{S}), but we include them for completeness. 

Throughout this subsection, we let $S$ be a noetherian local scheme with  closed point $s$ and residue field $k$.

\begin{prop} \label{prop:ci}
Let $G/S$ be a finite flat group scheme which acts on $\bb{P}^N_S$, let $Z$ be the fixed locus, and suppose that the special fiber $Z_s$ is of dimension $N - r$.  Then for any $1 \leq r' \leq r - 1$, there is a complete intersection $Y \subset \bb{P}^N_S$ of relative dimension $r'$ on which $G$ acts freely, such that the quotient $Y/G$ is a smooth projective scheme over $S$ with geometrically connected fibers. 
\end{prop}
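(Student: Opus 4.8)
The plan is to relativize the preceding lemma over the field $k$ by lifting a complete intersection on the special fiber $\bb{P}^N_s$ to all of $\bb{P}^N_S$, using that $S$ is a DVR so that flatness over $S$ is equivalent to the uniformizer being a nonzerodivisor. First I would run the field-case lemma on the special fiber: the action of $G_s$ on $\bb{P}^N_s$ has fixed locus of codimension $r$, so for any $1 \le r' \le r - 1$ there is a complete intersection $Y_s \subset \bb{P}^N_s$, cut out by $N - r'$ homogeneous forms, on which $G_s$ acts freely and with $X_s := Y_s/G_s$ smooth projective over $k$. The point of working over $\bb{P}^N_S$ directly (rather than inside a projective embedding of $\bb{P}^N_S/G$) is that homogeneous forms on $\bb{P}^N_k$ lift tautologically to homogeneous forms on $\bb{P}^N_S$ of the same degrees, since $R \to k$ is surjective.

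Next I would take arbitrary lifts $\tilde f_1, \dots, \tilde f_{N-r'} \in R[x_0, \dots, x_N]$ of the defining forms of $Y_s$ and let $Y \subset \bb{P}^N_S$ be their common zero scheme. The key claims to verify about $Y$ are: (i) $Y$ is flat over $S$ of relative dimension $r'$; (ii) $Y_s$ is the special fiber of $Y$; (iii) $G$ acts freely on $Y$; and (iv) $X := Y/G$ is smooth projective over $S$. For (ii) this is immediate from the construction. For (i), since $Y_s$ is a complete intersection of the expected dimension $r'$ in $\bb{P}^N_k$, the forms $\tilde f_i$ form a regular sequence on the fiber, and a standard argument (the uniformizer together with the $\tilde f_i$ forming a regular sequence on the local rings, using that $\bb{P}^N_S$ is flat over $S$ and $Y_s$ has the expected codimension) shows $Y$ is $S$-flat with Cohen-Macaulay fibers of dimension $r'$; in particular $Y$ is a relative complete intersection. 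For (iii): freeness of the $G$-action on $Y$ is an open condition by Lemma \ref{lem:rep}, and it is compatible with base change, so the free locus of $Y$ is an open subscheme containing all of $Y_s$; since $Y \to S$ is proper (being a closed subscheme of $\bb{P}^N_S$) and $Y_s$ is the unique closed fiber, the complement of the free locus, being closed and missing $Y_s$, must be empty — hence $G$ acts freely on all of $Y$. Then (iv) follows: $X = Y/G$ exists and is projective over $S$ by Theorem \ref{thm:quot}(a),(b), and it is smooth over $S$ by Theorem \ref{thm:quot}(c) since $Y$ is smooth over $S$ (being $S$-flat with smooth fibers — the generic fiber $Y_\eta$ is smooth of dimension $r'$ because it is cut out in $\bb{P}^N_{k(\eta)}$ by the same number of equations as its dimension and we may, if necessary, perturb the lifts to arrange this, or simply invoke generic smoothness after noting $Y$ is a relative complete intersection).

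I expect the main obstacle to be the smoothness of the \emph{generic} fiber $Y_\eta$: a naive lift of the equations of $Y_s$ guarantees a flat family with the correct special fiber, but there is no reason a priori for $Y_\eta$ to be smooth, and indeed one must argue that a generic choice of lift works. The cleanest route is to note that $Y \subset \bb{P}^N_S$ is a relative complete intersection of relative dimension $r'$ (from (i)), so its non-smooth locus is closed in $Y$; this locus does not meet $Y_s$ since $Y_s$ is smooth, hence (by properness over $S$, as above) it is empty, giving smoothness of $Y$ over $S$ for free — no genericity of the lift is actually needed once flatness and smoothness of the special fiber are in hand. Thus the real work is concentrated in establishing (i), i.e., that lifting the equations of a complete intersection in the closed fiber produces an $S$-flat family; this is where one uses that $\bb{P}^N_S \to S$ is flat, that $S$ is a DVR, and that $Y_s$ has the expected dimension, via the local criterion for flatness / regular sequence argument.
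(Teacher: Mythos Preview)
Your argument has a genuine gap: you never explain why $G$ acts on the lifted $Y$ at all. The defining forms of $Y_s$ coming from the field lemma are $G_s$-invariant (they are pullbacks from the quotient), but an \emph{arbitrary} lift $\tilde f_j \in R[x_0,\dots,x_N]$ has no reason to be $G$-invariant, so $Y = V(\tilde f_1,\dots,\tilde f_{N-r'})$ need not be $G$-stable. Your step (iii) assumes there is already a $G$-action on $Y$ and then argues freeness via openness, but the very existence of that action is what is missing. Over a DVR one cannot simply assert that $G$-invariant sections of $\ca{O}_{\bb{P}^N_S}(d)$ surject onto $G_s$-invariant sections on the special fiber without an argument (taking $G$-invariants is not exact in general for non-linearly reductive group schemes in characteristic $p$).

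The paper avoids this by carrying out the lift not in $\bb{P}^N_S$ but in the ambient $\bb{P}^M_S$ containing the global quotient $P = \bb{P}^N_S/G$: one lifts the forms $\ol f_j$ cutting out $\ol L \subset \bb{P}^M_k$ to forms $f_j$ on $\bb{P}^M_S$, sets $X = L \cap i(P)$, and then defines $Y$ as the pullback $(i\circ\pi)^{-1}(L)$ in $\bb{P}^N_S$. Because the defining equations now factor through the quotient map $\pi: \bb{P}^N_S \to P$, the resulting $Y$ is automatically $G$-stable and has quotient $X$ by construction. The remaining verifications (non-intersection with the image of the fixed locus via properness, and smoothness of $X$ over $S$ via openness of the smooth locus and smoothness of the special fiber) are then exactly the openness/properness arguments you already sketched. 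So your overall spreading-out strategy is right, but the choice of ambient space in which to lift is not incidental: it is precisely what guarantees $G$-equivariance of the family.
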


\begin{proof}
Since $\bb{P}^N_S$ is projective, the categorical quotient $\bb{P}^N_S/G$ exists as a projective variety $P$ by Theorem \ref{thm:quot} (b). Let $\pi: \bb{P}^N_S \to P = \bb{P}^N/G$ and $i: P \to \bb{P}^M$ be the quotient map and a closed immersion, respectively.  If $U \subset \bb{P}^N_S$ denotes the free locus, then by Theorem \ref{thm:quot} (c), $U/G$ is smooth over $S$.

By applying Gabber's Bertini theorem \cite[Corollary 1.7]{Gab} in combination with Theorem \ref{thm:quot} (c) on the special fiber, we find that there exists a complete intersection $\ol{L} = V(\ol{f}_1, \dots, \ol{f}_{N-r'}) \subset \bb{P}^M_s$ such that $\ol{L}$ does not intersect $i(\pi(Z_s))$ and $\ol{L} \cap i(P_s)$ is smooth projective over $k$ of dimension $r'$.  In particular, locally on the special fiber of $i(U/G)$, $\ol{f}_1, \dots, \ol{f}_{N-r'}$ form a regular sequence.

Choose lifts $f_i$ of the equations defining $\ol{L}$ to $S$, let $L \subset \bb{P}^M_S$ be the corresponding lift of $\ol{L}$, and set $X := L \cap i(P)$, a projective $S$-scheme.  We claim that $X$ is smooth over $S$. Indeed, first note that $X$ doesn't intersect $i(\pi(Z))$ since there is no intersection on the special fiber and this intersection is proper; thus $X = L \cap i(U/G)$. Since the $\ol{f}_i$ form a regular sequence locally on the special fiber of $i(U/G)$ and $i(U/G)$ is flat over $S$, it follows from \cite[\href{https://stacks.math.columbia.edu/tag/0470}{Tag 0470}]{Sta} that $X$ is flat over $S$. We conclude that all fibers of $X$ are smooth of relative dimension $r$ as the special fiber is smooth and the smooth locus is open on $S$ by \cite[Theorem 12.2.4 (iii)]{EGAIV}.  By the fiberwise criterion for smoothness \cite[\href{https://stacks.math.columbia.edu/tag/01V8}{Tag 01V8}]{Sta}, $X$ is smooth over $S$.

Let $g_i = (i \pi)^*(f_i)$ and $Y = V(g_1, \dots, g_{N-r'}) \subset \bb{P}^N_S$.  Then by construction, $Y$ is a complete intersection of relative dimension $r'$ on which $G$ acts freely with quotient $X$.  Since $Y$ has geometrically connected fibers (being a complete intersection), so does $X$, and the proposition is proven. 
\end{proof}

In order to apply the proposition to arbitrary cohomological jumps, we would like to find actions of our group scheme of interest on projective space such that the codimension of the fixed locus can be made arbitrarily large.  One source of such actions is the regular representation of a group scheme $G = \Spec A$ over $S = \Spec R$ on its ring of functions, whose definition we now recall.

First, to any $R$-module $M$, we may associate the functor $\ul{M}$ on $R$-algebras defined by $\ul{M}(R') = M \ten_R R'$.  In this case, a representation of $G$ on $M$ is defined to be a linear action of $G$ on the functor $\ul{M}$. Moreover, an action of $G$ on an affine scheme $\Spec B$ induces a representation of $G$ on $B$ via left translation of regular functions (see \cite[Part I.2]{Jan} for more details).

Taking the action to be left multiplication $G \times G \to G$, we obtain the regular representation of $G$ on its ring of functions $A$.  Note that since $G$ is a finite flat group scheme and $S$ is local, the functor $\ul{A}$ is represented by the scheme $\Spec(\Sym^\bu_R A^\vee)$, which is a relative affine space over $S$.  We denote this affine $G$-space by $X_G$ below.

One of the goals of this section is to give a detailed proof of the following result of Raynaud, which says that regular representations provide a good source of group actions whose fixed locus has high codimension.

\begin{prop}[{\cite[Lemme 4.2.2 and preceding paragraph]{Ray}}] \label{prop:fixed}
Let $G/S$ be a finite flat group scheme of order $d > 1$, and let $Y \subset \bb{P}(X_G^r)$ be the fixed locus for the action of $G$ by left translation. Then for each $t \in S$, $Y_t$ has codimension at least $r$ in its fiber $\bb{P}(X_G^r)_t$. 
\end{prop}

The following lemma will allow us to reduce the proof of Proposition \ref{prop:fixed} to the case $r = 1$.

\begin{lem} \label{lem:codim}
Let $k$ be an algebraically closed field.  Let $G/k$ be a finite group scheme which acts linearly on an affine space $V$ over $k$ such that the induced action of $G$ on $\bb{P}(V)$ is nontrivial.  Then for any $r \geq 1$, the fixed locus for the induced action of $G$ on $\bb{P}(V^r)$ has codimension at least $r$.
\end{lem}

\begin{proof}
We argue inductively on $r$, where the case of $r = 1$ is automatic as the action of $G$ on $\bb{P}(V)$ is nontrivial. Let $Z_V \subset \bb{P}(V)$ be the fixed locus for the action of $G$, a proper closed subset by assumption.  For $1 \leq i \leq r$, let  $W_i \subset V^r$ be the kernel of the projection $V^r \to V$ onto the $i$-th factor, and set 
\be
Y = \bigcup_{i = 1}^r \bb{P}(W_i) \subset \bb{P}(V^r).
\ee
Then $Y$ is stable under the action of $G$.  We have a projection morphism
\be
\pi: \bb{P}(V^r) \setminus Y \to \bb{P}(V)^r,
\ee
with fibers of dimension $r-1$.  The fixed locus for the action of $G$ on $\bb{P}(V^r) \setminus Y$ is contained in $\pi^{-1}(Z_V^r)$, a space of dimension $r \dim(Z_V) + r - 1$. Since $\dim(Z_V) < n$, the codimension of this space in $\bb{P}(V^r)$ is at least $r$.  The locus $Y$ is the union of $r$ $G$-stable spaces isomorphic (as $G$-spaces) to $\bb{P}(V^{r-1})$, and by induction the fixed locus for the action of $G$ on $\bb{P}(V^{r-1})$ has codimension at least $r-1$, so the result follows.
\end{proof}

\begin{proof}[Proof of Proposition \ref{prop:fixed}]
First, by the compatibility of the free locus with arbitrary base change (Lemma \ref{lem:rep}), $Y_t$ is the fixed locus for the action of $G_t$ on $\bb{P}(X^r_{G_t})$ under the natural identification $\bb{P}(X^r_G)_t \cong \bb{P}(X^r_{G_t})$.  Thus we may pass to geometric fibers and assume that our base is the spectrum of an algebraically closed field $k$, which we take to be of characteristic $p > 0$.\footnote{If the characteristic of $k$ is zero, then the proof is identical except we don't need to consider the non-reduced group schemes $\alpha_p$ and $\mu_p$.}  Then by Lemma \ref{lem:codim}, it suffices to treat the case $r = 1$. 

So let $G/k$ be a nontrivial finite flat group scheme of order $d$. For the regular representation $X := X_G$ of $G$, we consider the subset of lines with nontrivial stabilizer
\be
F := \{[L] \in \bb{P}(X): \stab_G(L) \neq 0\} \subset \bb{P}(X).
\ee
Our goal is to show that $F$ is a proper subset of $\bb{P}(X)$. To do this, we will show that $\dim(F) < d - 1$.

As a first step, we consider a fixed simple subgroup $H \subset G$, so that by the classification of height 1 group schemes via restricted Lie algebras \cite[Exp. VIIA, Remarque 7.5]{SGA3} $H$ is isomorphic to $\Z/p, \alpha_p$, or $\mu_\ell$ for $\ell$ a prime number. Consider the set 
\be
F_H := \{[L] \in \bb{P}(X): H \subset \stab_G(L)\}
\ee
of lines whose stabilizer contains $H$. Clearly $F$ is the union of $F_H$ over all simple subgroups $H \subset G$. Depending on $H$, we obtain a bound for $\dim(F_H)$ as follows:

Case 1: $H \cong \Z/p$.  An action of $\Z/p$ on a line $L$ is given by a homomorphism $\Z/p \to \G_m$.  Since the only such homomorphism is the trivial one, $L \in F_H$ is fixed pointwise.  Thus $L \subset \ul{A}^H \subset X$. But $A^H$ is the coordinate ring of $G/H$, a group scheme of order $d/p$, and consequently $\ul{A}^H \subset X$ is represented by an affine linear subspace of dimension $d/p$.  It follows that $\dim(F_H) \leq  d/p-1$.

Case 2: $H \cong \alpha_p$.  As above, $\alpha_p$ fixes $L$ pointwise.  By the same argument as in case 1, $\dim(F_H) \leq d/p - 1$.

Case 3: $H \cong \mu_\ell$ for $\ell$ any prime.  Then $H$ acts on $L$ through the $i$-th power character $\chi_i: H \to \G_m$ for some $i \in \Z/\ell\Z$. Since $\mu_\ell$ is diagonalizable, the action of $H$ on $X$ induces a $\Z/\ell\Z$-grading $A \cong \oplus_{i \in \Z/\ell \Z} A^i$, such that $H$ acts on $\ul{A}^i$ via $\chi_i$.  But by the construction of the regular representation, the action $H \times X \to X$ is free, so that $A^0 \cong A^H$ and each $A^i$ an invertible $A^0$-module (\cite[Exp. VIII, Proposition 4.1, Th\'eorème 5.1]{SGA3}). We conclude that
\be
L \subset \bigsqcup_{i \in \Z/\ell \Z} \ul{A}^i,
\ee 
and thus $\dim(F_H) \leq \dim(\bb{P}(\ul{A}^0)) = d/\ell - 1$.

Next, to handle the union over all simple subgroups $H$, we consider the cases of \'etale and connected $H$ separately.

First note that $G$ has only finitely many \'etale subgroup schemes and therefore the locus $F_{\et} \subset F$ of lines whose stabilizer contains a nontrivial \'etale subgroup is a union of $F_H$ for finitely many simple $H \subset G$.  By the above calculations for $H = \Z/p$ and $\mu_\ell$ for $\ell \neq p$, we find $\dim(F_H) \leq d/2 - 1$ for any $H$ and thus $\dim(F_{\et}) \leq d/2 - 1$. 

By the classification of height 1 group schemes via restricted Lie algebras, subgroups $H \subset G$ isomorphic to $\alpha_p$ (resp. $\mu_p$) correspond to one-dimensional subspaces of $\Lie(G)$ for which $F = 0$ (resp. $F(x) = x^p$).  We then consider the incidence correspondence
\be
Y = \{(H, L): H \subset G, H \cong \alpha_p \text{ or } \mu_p, L \in F_H\} \subset \bb{P}(\Lie(G)) \times \bb{P}(X).
\ee
By cases 2 and 3 above, any fiber of the projection $Y \to \bb{P}(\Lie(G))$ has dimension at most $d/p - 1$, and therefore $\dim(Y) \leq d/p - 1 + m - 1$, where $m = \dim(\Lie(G))$.  Since the image of $Y$ in $\bb{P}(X)$ is the locus $F_c \subset F$ of lines whose stabilizer contains a nontrivial connected subgroup, we conclude that $\dim(F_c) \leq d/p - 1 + m - 1$.

Combining the above, we find
\be
\dim(F) = \dim(F_{\et} \cup F_c) \leq \max(d/2-1, d/p-1 + m - 1).
\ee
Clearly $d/2 - 1 < d-1$.  Moreover since $p^m$ divides $d$, we have $d/p-1 + m - 1 < d-1$ as well, so that $\dim(F) < d-1$ as desired.
\end{proof}

\begin{cor}[Existence of Godeaux--Serre varieties] \label{cor:GS}
Let $S$ be the spectrum of a noetherian local ring and $G$ a nontrivial finite flat $S$-group scheme.  Then for any $n > 0$, there is an integer $N > 0$ and a complete intersection $Y \subset \bb{P}^N_S$ of relative dimension $n$ on which $G$ acts freely, such that the quotient $Y/G$ is a smooth projective scheme over $S$ with geometrically connected fibers.
\end{cor}

\begin{proof}
This is a direct consequence of Proposition \ref{prop:ci} and Proposition \ref{prop:fixed}.
\end{proof}

\section{Proof of Theorem \ref{thm:main}} \label{sec:pf}
Let $e, n \geq 1$ be positive integers, and let $G/S$ be the group scheme constructed in Lemma \ref{lem:gp}, so that $G_s \cong \alpha_p \oplus \alpha_p$ and $G_\eta \cong \Z/p^2\Z$. By Proposition \ref{prop:coh},  we may choose an $m > 0$ such that for all $1 \leq k \leq n$ and nonnegative $i, j$ with  $1 \leq i + j \leq n$, each of the finitely many inequalities
\begin{align} \label{eqn:ineqdR}
h^k_{\dR}(B(G^m_s)) &\geq h^k_{\dR}(B(G^m_\eta)) + e \\
\label{eqn:ineqHodge} h^{i, j}(B(G^m_s)) &\geq h^{i, j}(B(G^m_\eta)) + e
\end{align}
holds. By Corollary \ref{cor:GS}, there is a projective space $\bb{P}^N_S$ with an action of $G^m$ and a complete intersection $Y \subset \bb{P}^N_S$ of relative dimension $n+1$ on which the action is free, such that $X := Y/G^m$ is a smooth projective scheme over $S$ with geometrically connected fibers.  By the equivariant Lefschetz hyperplane theorem for de Rham and Hodge cohomology    \cite[Proposition 5.10 (2)]{ABM}, we have for each $t \in S$ and all $0 \leq k \leq n$ and $0 \leq i + j \leq n$ that
\begin{align} \label{eqn:lefdR}
H^k_{\dR}(X_t/k(t)) &\cong H^k_{\dR}([\bb{P}^N_{k(t)}/G^m_t]) \\
\label{eqn:lefHodge} H^{j}(X_t, \Omega^i) &\cong H^j([\bb{P}^N_{k(t)}/G^m_t], \Omega^i) 
\end{align}
where $[\bb{P}^N_{k(t)}/G^m_t]$ denotes the quotient stack. But by the construction of Proposition \ref{prop:fixed}, $[\bb{P}^N_{k(t)}/G^m_t]$ is the projectivization of a vector bundle over $BG^m_t$, and therefore by the projective bundle formulae\footnote{The projective bundle formula for Hodge cohomology of syntomic stacks is \cite[Proposition 5.11]{ABM}. The projective bundle formula for the de Rham cohomology of smooth stacks follows from the classical version for schemes \cite[\href{https://stacks.math.columbia.edu/tag/0FMS}{Tag 0FMS}]{Sta} as in \cite[Proposition 5.11]{ABM}.} we  can compute
\begin{align*}
H^k_{\dR}([\bb{P}^N_{k(t)}/G^m_t]) &\cong H^k_{\dR}(\bb{P}^N_{k(t)} \times BG^m_t) \cong H^k_{\dR}(BG^m_t) \oplus H^{k-2}_{\dR}(BG^m_t) \oplus \dots \\
H^j([\bb{P}^N_{k(t)}/G^m_t], \Omega^i) &\cong H^j(\bb{P}^N_{k(t)} \times BG^m_t, \Omega^i) \cong H^j(BG^m_t, \Omega^i) \oplus H^{j-1}(BG^m_t, \Omega^{i-1}) \oplus \dots.
\end{align*}
Combining these with (\ref{eqn:ineqdR}),(\ref{eqn:lefdR}), (\ref{eqn:ineqHodge}), (\ref{eqn:lefHodge}), and the nonnegativity statement of Proposition \ref{prop:coh} we find
\be
h^k_{\dR}(X_s) \geq h^k_{\dR}(X_\eta) + e
\ee
for all $1 \leq k \leq n$ and
\be
h^{i, j}(X_s) \geq h^{i, j}(X_\eta) + e
\ee
for all $i, j$ with $1 \leq i + j \leq n$, as desired.

\begin{rek}\label{rek:alt}
As pointed out to the author by Sean Cotner, one can also obtain arbitrarily large jumps in de Rham cohomology by taking self products of the surface constructed in \cite{CZ} and slicing the result by hyperplanes.  Our approach was motivated by the search for families of group schemes in equal characteristic $p$ that have visibly different cohomological behavior in all degrees on the generic and special fibers (e.g. a non-split extension generically and a split extension on the special fiber).
\end{rek}

\bigskip
\end{document}